\newtheorem{Thm}{Theorem}
\newtheorem{Lem}[Thm]{Lemma}
\newtheorem{Prop}[Thm]{Proposition}
\newtheorem{Cor}[Thm]{Corollary}
\newtheorem{Ques}[Thm]{Question}
\begin{document}
\title{Lips and swallow-tails of singularities of product maps}
\author{Kazuto Takao}
\address{Institute of Mathematics for Industry, Kyushu University, 744 Motooka, Nishi-ku, Fukuoka 819-0395, Japan}
\email{takao@imi.kyushu-u.ac.jp}
\thanks{Supported by JSPS and CAPES under the Japan--Brazil Research Cooperative Program}
\keywords{Morse function, stable map.}
\subjclass[2010]{57R45}
\begin{abstract}
Lips and swallow-tails are generic local moves of singularities of a smooth map to a $2$-manifold.
We prove that these moves of singularities of the product map of two functions on a $3$-manifold can be realized by isotopies of the functions.
\end{abstract}
\maketitle

\section{Introduction}

We consider the relationship between a pair of maps and the product map.
Let $M,P,Q$ be smooth manifolds and let $C^\infty (M,*)$ denote the space of smooth maps from $M$ to a smooth manifold $*$ endowed with the Whitney $C^\infty $ topology.
Two smooth maps $F\in C^\infty (M,P)$ and $G\in C^\infty (M,Q)$ determine the product map $(F,G)\in C^\infty (M,P\times Q)$ by $(F,G)(p)=(F(p),G(p))$.
Conversely, a smooth map $\varphi \in C^\infty (M,P\times Q)$ can be decomposed into $\pi _P\circ \varphi \in C^\infty (M,P)$ and $\pi _Q\circ \varphi \in C^\infty (M,Q)$, where $\pi _P:P\times Q\rightarrow P$ and $\pi _Q:P\times Q\rightarrow Q$ are the projections.
By \cite[Chapter~I\hspace{-.1em}I, Proposition~3.6]{golubitsky-guillemin}, this correspondence gives the homeomorphism
\begin{equation}
C^\infty (M,P)\times C^\infty (M,Q)\cong C^\infty (M,P\times Q).\tag{$\sharp $}\label{homeo}
\end{equation}

The homeomorphism (\ref{homeo}) however does not mean the singularity theoretic equivalence.
More specifically, isotopies of $F$ and $G$ do not always induce an isotopy of $(F,G)$, and an isotopy of $\varphi $ does not always induce isotopies of $\pi _P\circ \varphi $ and $\pi _Q\circ \varphi $.
Here, an isotopy of a map is a homotopy preserving the topological properties of the map.
The partition of a mapping space into isotopy classes is of general interest in singularity theory, but few things are known about the relation between the partitions of the both sides of the homeomorphism (\ref{homeo}).

We focus on the case where $M$ is closed and $3$-dimensional, and $P,Q$ are $1$-dimensional.
We do not assume the orientability of $M$.
Suppose $F:M\rightarrow P$ and $G:M\rightarrow Q$ are smooth functions such that $\varphi =(F,G)$ is stable.
A singular point of $\varphi $ is then either a fold point or a cusp point.
By Levine's \cite{levine1} theorem, we can eliminate the cusp points by a homotopy of $\varphi $.
It implies that we can eliminate the cusp points by homotopies of $F$ and $G$.
Note that we cannot reduce the number of cusp points by an isotopy of $\varphi $.
We propose the following question.

\begin{Ques}\label{ques}
Can we eliminate the cusp points of $\varphi $ by (quasi-)isotopies of $F$ and $G$?
\end{Ques}

Our strategy to attack Question~\ref{ques} is to deform $\varphi $ by a sequence of global isotopies and local homotopies which can be realized by (quasi-)isotopies of $F,G$.
Johnson \cite[Section~6]{johnson1} showed what kind of global isotopy of $\varphi $ can be realized by (quasi-)isotopies of $F,G$ (see Corollary~\ref{isotopy}).
In this paper, we prove the following.

\begin{Thm}\label{moves}
The local moves of the discriminant set of $\varphi $ as in Figure~\ref{fig_moves} can be realized by isotopies of $F$ and $G$.
\end{Thm}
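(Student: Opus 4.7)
\emph{Proof proposal.}  My plan is to reduce each move to a local model in a chart near the site of the move, and then extend to a global isotopy by splicing with the original functions using a bump function.

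Each move is supported in an arbitrarily small neighborhood of a single point $p\in M$, the site of the codimension-one degeneracy.  Near $p$ I would choose coordinates $(x,y,z)$ on $M$ and coordinates $(u,v)$ on $P\times Q$ adapted to the product structure.  Since $dF_p\neq 0$ generically at a cusp, I can change coordinates on $M$ to make $F$ equal to $x$ on the chart.  The problem then becomes a $1$-parameter deformation of $G$ alone: construct $G_t$, with $G_0=G$ on the chart, such that the product $(x,G_t(x,y,z))$ realizes the prescribed local move of its discriminant and $G_t$ is Morse for every $t$ in a neighborhood of $0$.

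For both moves I would take $G_t$ to be a polynomial unfolding drawn from the versal unfolding of the associated codimension-one multi-jet in the space of stable maps $\mathbb{R}^3\to\mathbb{R}^2$; the existence of such unfoldings is standard.  The essential observation is that at a cusp of $\varphi$ one has $\nabla G\neq 0$ generically, so by shrinking the chart and restricting to $t$ close to $0$ the perturbation $G_t-G_0$ is $C^{1}$-small and $G_t$ remains critical-point free on the chart.  Thus $(F,G_t)$ is an isotopy of the product map realized by an isotopy of each factor (with $F$ in fact unchanged).

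The main obstacle I anticipate is the globalization step: I must splice the local family $G_t$ with the original $G$ using a bump function $\chi\colon M\to[0,1]$ supported in the chart and equal to $1$ on a smaller neighborhood containing the move, and verify that $G+\chi\cdot(G_t-G_0)$ has no spurious critical points in the transition annulus where $\nabla\chi\neq 0$.  For small $t$ this follows from the $C^{1}$-smallness of $G_t-G_0$ combined with a lower bound on $|\nabla G|$ in the transition region, the latter arranged by shrinking the chart so that the pre-existing Morse critical points of $G$ lie well outside it.  With this step in hand, the spliced family is a global isotopy of both $F$ and $G$ realizing the required local move of the discriminant of $\varphi$.
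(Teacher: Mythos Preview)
There is a genuine gap in the appeal to versal unfoldings. The versal unfolding of a lip or swallow-tail classifies nearby maps $\mathbb{R}^3\to\mathbb{R}^2$ up to right-left equivalence, and the target diffeomorphisms entering that equivalence have no reason to preserve the product structure of $P\times Q$. So even once you have arranged $F=x$ on the chart, the fact that $\varphi=(x,G)$ is right-left equivalent to a member of the standard one-parameter model does \emph{not} let you conclude that $G$ itself can be deformed, through functions without critical points, to some $\tilde G$ with $(x,\tilde G)$ on the other side of the bifurcation. Producing such a deformation of one factor alone is exactly the content of the theorem, and your sketch does not supply it. A related problem is the ``small $t$'' step: your $C^1$-smallness argument only shows that \emph{sufficiently small} perturbations of $G$ stay critical-point-free, whereas the move is a finite displacement across a codimension-one wall, and you give no mechanism forcing the required perturbation to be small. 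Finally, in the transition annulus you control only $\nabla G_t$, but the singular set of $(F,G_t)$ is governed by the condition $\nabla F\parallel\nabla G_t$, so spurious cusps of the product map could still appear there even though $G_t$ is regular.

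The paper's argument is organized quite differently. It does not work in a coordinate ball but in the handlebody $V=\Sigma\times[g_-,g_+]$ obtained as a component of $\varphi^{-1}(R)$ for a suitable parallelogram $R$, and it regards $F$ on $V$ as the one-parameter family $\alpha_t=F|_{G^{-1}(t)}$ of functions on $\Sigma$. The move is then achieved by replacing $\{\alpha_t\}$ with a \emph{quasi-isotopy} $\{\beta_t\}$ having the same endpoints; the existence of such a $\{\beta_t\}$ is exactly Maksymenko's Theorem~\ref{maksymenko}, which is where the hypothesis on the Stein factorization at the two ends of $R$ is used. Proposition~\ref{cerf} converts $\{\beta_t\}$ by an explicit rescaling into a smooth $\tilde F$ agreeing with $F$ outside $V$ such that $(\tilde F,G)$ is stable with only folds in $V$. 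That $\tilde F$ is isotopic to $F$ is argued not from $C^1$-smallness but from the facts that neither $F$ nor $\tilde F$ has critical points in the handlebody $V$, that they agree on $\partial V$, and that the mapping class group of a handlebody injects into that of its boundary surface.
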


\begin{figure}[ht]
\begin{picture}(215,185)(0,0)
\put(0,105){\includegraphics{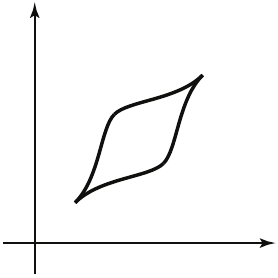}}
\put(135,105){\includegraphics{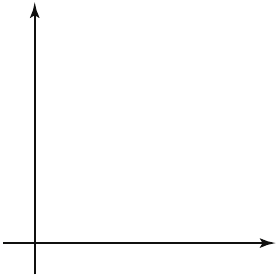}}
\put(0,5){\includegraphics{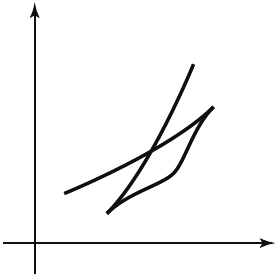}}
\put(135,5){\includegraphics{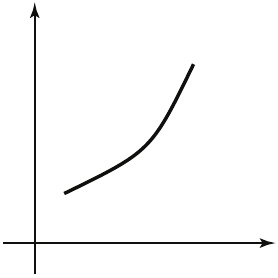}}
\put(95,40){{\LARGE $\longrightarrow $}}
\put(95,140){{\LARGE $\longrightarrow $}}
\put(0,172){$f$}
\put(135,172){$f$}
\put(0,72){$f$}
\put(135,72){$f$}
\put(70,105){$g$}
\put(205,105){$g$}
\put(70,5){$g$}
\put(205,5){$g$}
\end{picture}
\caption{Local moves which reduce the number of cusp points of $\varphi $.
Here, $(f,g)$ is the coordinate system given by the product structure of $P\times Q$.
We require that the local moves do not involve tangent lines of the discriminant set parallel to the axes.}
\label{fig_moves}
\end{figure}

\noindent
Forgetting the axes, these moves are ones of generic local moves of singularities of $\varphi $ known as a ``lip" and a ``swallow-tail", respectively.
See \cite{mata-lorenzo} for the classification of generic local moves.
We expect that we can use our method (Proposition~\ref{cerf}) to work out other local moves, and we hope that we can use them to approach a global theory.

We would like to mention here the relation of this work to Heegaard theory of $3$-manifolds.
Rubinstein--Scharlemann \cite{rubinstein-scharlemann1} introduced {\it the graphic} for comparing two Heegaard splittings.
Kobayashi--Saeki \cite{kobayashi-saeki} interpreted the graphic as the discriminant set of the product map of two functions representing the splittings.
Johnson \cite{johnson1} gave an upper bound for the {\it Reidemeister--Singer distance} between two Heegaard splittings in terms of the graphic.
The author \cite{takao3} developed Johnson's idea to show that the Reidemeister--Singer distance is at most the sum of the genera of the splittings plus the number of cusp points of the product map.
If Question~\ref{ques} is answered positively, it ensures that the Reidemeister--Singer distance is at most the sum of the genera, which is the best possible bound by Hass--Thompson--Thurston's \cite{hass-thompson-thurston} example.

\subsection*{Acknowledgement}

The author would like to thank Osamu~Saeki, Takashi~Nishimura and Kentaro~Saji for valuable discussions and conversations.
He would also like to thank the referee for many helpful suggestions.

\section{Morse functions and stable maps}\label{Stable}

In this section, we briefly review standard definitions and facts on singularities of smooth maps.
We refer the reader to \cite{milnor} for basic notions in Morse theory, and to \cite{golubitsky-guillemin} for detailed description of stable maps.

A {\it Morse function} on a compact smooth manifold $M$ possibly with boundary is a smooth function $F$ from $M$ to either $\mathbb{R}$ or $S^1$ satisfying the following:
\begin{itemize}
\item All the critical points of $F$ are non-degenerate and belong to ${\rm int}M$.
\item The function $F$ is constant on each component of $\partial M$.
\end{itemize}
We regard $\mathbb{R}$ and $S^1$ as oriented.
This gives the distinction between locally minimal components and locally maximal components of $\partial M$ with respect to $F$.
A smooth homotopy consisting of Morse functions is called a {\it quasi-isotopy}.
Two Morse functions are said to be {\it quasi-isotopic} if there is a quasi-isotopy between them.

We consider generic homotopies of smooth functions from a compact connected surface $\Sigma $ to ${\mathbb R}$.
Let $\{ \alpha_t:\Sigma \rightarrow {\mathbb R}\} _{t\in [-1,1]}$ be a smooth homotopy.
A {\it birth} (resp. {\it death}) of $\{ \alpha_t\} _{t\in [-1,1]}$ is the pair $(o,\sigma )$ of $o\in (-1,1)$ and $\sigma \in {\rm int}\Sigma $ such that $\alpha_\tau (\xi ,\eta )=\tau \xi _\tau -\xi _\tau ^3+\eta _\tau ^2$ for a local coordinate system $(\xi ,\eta )$ at $\sigma $, a local coordinate $\tau $ at $o$ whose direction agrees (resp. disagrees) with that of $t$, and a local coordinate of the target ${\mathbb R}$.
Here $\{ (\xi ,\eta )\mapsto (\xi _\tau ,\eta _\tau )\} _\tau $ is a smooth family of coordinate transformations.
A {\it passing} of $\{ \alpha_t\} _{t\in [-1,1]}$ is the pair $(o,\{ \sigma ,\sigma '\} )$ of $o\in (-1,1)$ and $\{ \sigma ,\sigma '\} \subset {\rm int}\Sigma $ such that $\sigma ,\sigma '$ are non-degenerate critical points of $\alpha_o$ with the same value, and $\{ (t,v)\in {\mathbb R}^2\mid v\text{ is a critical value of }\alpha_t|_{U\cup U'}\} $ has a transverse crossing at $(o,\alpha_o(\sigma ))$ for small neighborhoods $U,U'$ of $\sigma ,\sigma '$, respectively.
The homotopy $\{ \alpha_t\} _{t\in [-1,1]}$ is said to be {\it generic} if it consists of Morse functions whose critical points have pairwise distinct values except that $\{ \alpha _t\} _{t\in [-1,1]}$ has either a single birth, a single death or a single passing at each $t$ in a finite subset of $(-1,1)$.
Note that a generic homotopy without births and deaths is a quasi-isotopy.

\begin{Thm}[Maksymenko \cite{maksymenko}]\label{maksymenko}
Two Morse functions from $\Sigma $ to ${\mathbb R}$ are quasi-isotopic if and only if they have the same number of critical points of each index, and the same sets of locally minimal and locally maximal components of $\partial \Sigma $.
\end{Thm}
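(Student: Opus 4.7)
The ``only if'' direction is immediate. Along a quasi-isotopy $\{F_t\}$, each non-degenerate critical point persists by the implicit function theorem applied to $dF_t=0$, and its Morse index equals the signature of a continuously varying non-degenerate Hessian, hence is constant. On each boundary component the inward normal derivative of $F_t$ is continuous and non-vanishing (there being no critical point on $\partial \Sigma$), so its locally minimal or maximal status is preserved.

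For the converse, the plan is to connect $F_0$ and $F_1$ by a generic smooth homotopy $\{\alpha_t\}_{t\in[-1,1]}$ and then deform it into a quasi-isotopy with the same endpoints. A generic homotopy exists by standard transversality and has only finitely many births, deaths, and passings, none of which occur on $\partial \Sigma$, so the boundary hypothesis propagates automatically. Since $F_0$ and $F_1$ have the same number of critical points of each index, the total number of births in $\{\alpha_t\}$ equals the total number of deaths for each consecutive pair of indices.

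The key step is pairwise cancellation of births and deaths. Given a birth at $o_1$ creating critical points $\sigma,\sigma'$ of indices $i,i+1$ and a death at $o_2>o_1$ killing critical points $\tau,\tau'$ of the same indices, the approach is first to insert additional passings (handle slides) so that, within the parametrized family of critical points, the arc starting at $\sigma$ ends at $\tau$ and the arc starting at $\sigma'$ ends at $\tau'$. Once this matching is in place, the birth time can be pushed forward until it meets the death; the local model $\tau\xi-\xi^3+\eta^2$ is then deformed across $\tau=0$ into a regular family, which is the standard Cerf cancellation. Iterating eliminates all births and deaths and leaves a quasi-isotopy.

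The main obstacle is verifying that the required matchings and intermediate passings are always realizable on $\Sigma$. This reduces to a purely two-dimensional Cerf-theoretic statement about critical-value graphs of one-parameter families of Morse functions on surfaces, proved by induction on the total number of critical events and using the fact that passings act transitively on admissible orderings of critical values compatible with the fixed boundary pattern. It is here that the restriction to surfaces (rather than higher-dimensional manifolds, where handle-slide obstructions appear) makes the matching free enough to close the argument.
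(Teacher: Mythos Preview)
The paper does not supply a proof of this theorem; it is quoted from Maksymenko \cite{maksymenko} and used as a black box. So there is no ``paper's own proof'' to compare against, and your task was essentially to reproduce or outline Maksymenko's argument.

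Your ``only if'' direction is fine. In the ``if'' direction the outline is reasonable in spirit but there is a genuine gap at the cancellation step. You assert that, given a birth of type $(i,i{+}1)$ at time $o_1$ and a death of the same type at $o_2>o_1$, one can ``insert additional passings (handle slides)'' so that the two critical-point arcs born at $o_1$ are exactly the ones that die at $o_2$, and then slide the birth forward and cancel. Two things are being conflated here. A \emph{passing} in the sense defined in the paper is merely a crossing of critical values; it reorders the values but does nothing to change which critical point is which, so passings alone cannot rematch the endpoints of critical-point arcs. What you seem to want is the freedom to alter the attaching data of the handles in the family so that the arcs can be rerouted; that is a genuinely two-parameter Cerf move (a ``beak'' or ``swallowtail'' in the Cerf graphic of the family of paths), and its availability is exactly the content of the theorem, not a lemma you can invoke. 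Your final paragraph acknowledges this (``the main obstacle is verifying that the required matchings \ldots are always realizable'') but then disposes of it with an appeal to transitivity of passings on orderings of critical values, which, again, is about values and not about the identity of the critical points; it does not establish the matching you need.

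Maksymenko's own argument does not proceed by cancelling birth--death pairs in a generic path. He works instead with the action of the diffeomorphism group of $\Sigma$ and explicit surface-specific constructions to connect two Morse functions with the same numerical data directly, exploiting the low dimension in a different way. If you want to salvage the Cerf-theoretic route, you would have to prove the two-parameter rerouting lemma on surfaces in earnest; as written, the proposal asserts the hard step rather than proving it.
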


An {\it isotopy} of a smooth map $\varphi :M\rightarrow N$ between general smooth manifolds $M,N$ is a homotopy $\{ \varphi _t:M\rightarrow N\} _{t\in [0,1]}$ which is decomposed as $\varphi _t=H^N_t\circ \varphi \circ H^M_t$.
Here $\{ H^M_t\} _{t\in [0,1]}$, $\{ H^N_t\} _{t\in [0,1]}$ are smooth ambient isotopies of $M,N$, respectively, such that $H^M_0=id_M$, $H^N_0=id_N$.
Two smooth maps are said to be {\it isotopic} if there is an isotopy between them.

A {\it stable map} from $M$ to $N$ is a smooth map $\varphi :M\rightarrow N$ such that there exists an open neighborhood $U$ of $\varphi $ in $C^\infty (M,N)$ such that every map in $U$ is isotopic to $\varphi $.
We remark that an equivalent definition of stable map is given by using ``{\it right-left equivalent}'' in place of ``isotopic''.
We note that, in the case where $M$ is closed and $N$ is either $\mathbb{R}$ or $S^1$, the smooth map $\varphi $ is stable if and only if $\varphi $ is a Morse function whose critical points have pairwise distinct values.

Consider the case where $M$ is a closed smooth $3$-manifold and $N$ is a smooth surface.
Recall that $p\in M$ is a {\it regular point} of $\varphi $ if the differential $(d\varphi )_p:T_pM\rightarrow T_{\varphi (p)}N$ is surjective, and otherwise a {\it singular point}.
The set $S_\varphi $ of singular points of $\varphi $ is called the {\it singular set} and its image $\varphi (S_\varphi )$ is called the {\it discriminant set} of $\varphi $.
At a regular point $p\in M\setminus S_\varphi $, the map $\varphi $ has the standard form $\varphi (u,x,y)=(u,x)$ for some coordinate neighborhoods of $p$ and $\varphi (p)$.
Standard forms are also known for generic types of singular points as follows.

A {\it fold point} is a singular point $p$ where $\varphi $ has the form $\varphi (u,x,y)=(u,x^2\pm y^2)$ for a coordinate neighborhood $U$ of $p=(0,0,0)$ and a coordinate neighborhood of $\varphi (p)=(0,0)$.
The Jacobian matrix of $\varphi (u,x,y)=(u,x^2\pm y^2)$ says that the singular set $S_\varphi \cap U$ is the arc $\{ (u,0,0)\} $.
It follows that each singular point on $\{ (u,0,0)\} $ is also a fold point by a translation of the local coordinates.
The arc $\{ (u,0,0)\} $ is embedded to the arc $\{ (u,0)\} \subset N$ by $\varphi $.

A {\it cusp point} is a singular point $p$ where $\varphi $ has the local form $\varphi (u,x,y)=(u,ux-x^3+y^2)$.
One can check that the singular set $S_\varphi \cap U$ is the arc $\{ (3x^2,x,0)\} $, and consists of fold points except for the cusp point $p=(0,0,0)$.
Note that the arc $\{ (3x^2,x,0)\} $ is a regular curve but its image $\{ (3x^2,2x^3)\} \subset N$ has a cusp at $\varphi (p)=(0,0)$.

Assume that the singular set $S_\varphi $ consists only of fold points and cusp points.
By the above local observations and the compactness of $M$, we can see the outline of $S_\varphi $.
It is a $1$-dimensional submanifold of $M$, namely a collection of smooth circles.
There are finitely many cusp points and the restriction $\varphi |_{S_\varphi }$ is an immersion except that each cusp point maps to a cusp.
The next characterization of stable maps follows from Mather's theorems \cite[Theorem~A, Proposition~1.8]{mather4} and \cite[Theorem~4.1]{mather5}.

\begin{Thm}[Mather]\label{mather}
A smooth map $\varphi $ from a closed smooth $3$-manifold to a smooth surface is stable if and only if:
\begin{itemize}
\item The singular set $S_\varphi $ consists only of fold points and cusp points.
\item The restriction $\varphi |_{S_\varphi }$ has no double points at cusps, and the immersion $\varphi |_{S_\varphi \setminus \text{(cusp points)}}$ has only normal crossings.
\end{itemize}
\end{Thm}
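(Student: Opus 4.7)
The plan is to derive the characterization directly from Mather's two cited theorems, using the fact that the dimension pair $(3,2)$ lies in the ``nice dimensions'' where stability equals infinitesimal stability and can be detected pointwise by local normal forms together with a multi-jet transversality condition.

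First I would apply Mather's local classification of stable mono-germs (\cite[Theorem~A, Proposition~1.8]{mather4}) specialized to the dimension pair $(3,2)$. In this pair the complete list of stable mono-germs, up to right-left equivalence, consists of the three normal forms already written out in the paper: the submersion $(u,x,y)\mapsto(u,x)$, the fold $(u,x,y)\mapsto(u,x^2\pm y^2)$, and the cusp $(u,x,y)\mapsto(u,ux-x^3+y^2)$. Since every point of a stable $\varphi$ must have a stable mono-germ, this forces $S_\varphi$ to consist only of fold and cusp points, giving the first bullet. Conversely, any map whose singular set is built from these germs is locally stable at each point.

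Next I would invoke Mather's global criterion \cite[Theorem~4.1]{mather5}: because $M$ is closed the map $\varphi$ is proper, and stability is equivalent to local stability plus the requirement that for every target point $q$ the multi-germ of $\varphi$ at the finite set $\varphi^{-1}(q)\cap S_\varphi$ be stable. I would then translate this multi-germ stability into the geometric conditions on the discriminant. For a $k$-tuple of fold germs with common image $q$, multi-stability is equivalent to the tangent lines at $q$ of their $k$ image-arcs being in general position; that is, the multi-germ is stable iff the $k$ smooth branches of $\varphi(S_\varphi)$ cross transversely at $q$, which is exactly the normal-crossing condition on $\varphi|_{S_\varphi\setminus(\text{cusp points})}$. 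For a multi-germ containing at least one cusp, the image of the cuspidal branch already has a full $2$-jet of constraint at $q$, so any additional branch mapping to $q$ destroys infinitesimal stability; equivalently, a cusp point must be the unique preimage in $S_\varphi$ of its image, which is the ``no double points at cusps'' clause. Running the implications in both directions yields the theorem.

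The main obstacle is the translation step in the second paragraph: one must verify precisely that Mather's abstract transversality of the multi-jet section to the appropriate contact orbit in the multi-jet bundle coincides with the concrete geometric conditions of transverse branches and isolated cusps on the discriminant. Once this dictionary is in place — and it is a straightforward but careful tangent-space computation using the explicit normal forms above — both directions of the biconditional are obtained at once from Mather's theorems.
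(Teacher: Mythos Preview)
The paper does not supply its own proof of this theorem; it simply records that the characterization ``follows from Mather's theorems \cite[Theorem~A, Proposition~1.8]{mather4} and \cite[Theorem~4.1]{mather5}'' and moves on. Your proposal is exactly the standard unpacking of that sentence: classify the stable mono-germs in the pair $(3,2)$ via \cite{mather4} to obtain the first bullet, and then use the multi-germ transversality criterion from \cite{mather5} to obtain the second. So your approach is not different from the paper's---it is the proof the paper is pointing to but elects not to spell out.

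One small sharpening: in your fold-multi-germ discussion you speak of a $k$-tuple of branches in general position, but since the target is a surface the general-position requirement already forces $k\le 2$; the normal-crossing condition here means \emph{at most transverse double points}, not arbitrary $k$-fold transverse intersections. You implicitly use this, but it is worth saying explicitly when you carry out the tangent-space computation you flag as the main obstacle.
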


The {\it Stein factorization} $W_\varphi $ of a general smooth map $\varphi :M\rightarrow N$ is the quotient space $M/\sim $, where $p_1\sim p_2$ if $p_1,p_2$ belong to the same connected component of a level set of $\varphi $.
Let $q_\varphi $ denote the quotient map form $M$ to $W_\varphi $.
We can see that there is also a unique continuous map $\bar{\varphi }:W_\varphi \rightarrow N$ such that $\varphi =\bar{\varphi }\circ q_{\varphi }$.
The Stein factorization of a stable map from a closed smooth $3$-manifold to a smooth surface is, in fact, a $2$-dimensional cell complex.
See \cite{levine2} for example.

\section{Two functions and the product map}

In this section, we review a local theory of singularities of two functions and the product map.

We use the following notation.
Suppose $M$ is a closed smooth $3$-manifold and $P,Q$ are either ${\mathbb R}$ or $S^1$.
Let $F:M\rightarrow P$ and $G:M\rightarrow Q$ be smooth functions, and $\varphi $ denote the product map $(F,G)$.
While we do not assume $F,G$ to be Morse, we assume $\varphi $ to be stable in this section.

The singular set $S_\varphi $ includes the critical points of $F$ and $G$, which can be seen as follows.
For each point $p\in M$, there is a local coordinate system $(f,g)$ at $\varphi (p)$ given by the product structure of $P\times Q$.
The Jacobian matrix of $\varphi $ with respect to this coordinate system is composed of the gradients of $F$ and $G$.
If $p$ is a critical point of $F$ or $G$, the Jacobian matrix has rank at most one, namely $p$ is a singular point of $\varphi $.

We read information about $F,G$ from the discriminant set of $\varphi $.
Note that $\varphi |_{S_\varphi }$ is an immersion of circles with finitely many cusps.
We can define the {\it slope} of the discriminant set $\varphi (S_\varphi )$ at $\varphi (p)$ for each $p\in S_\varphi $ with respect to the coordinate system $(f,g)$.
In particular, a point on the discriminant set with slope zero (resp. infinity) is called a {\it horizontal} (resp. {\it vertical}) {\it point}.
We can also define the second derivative of $\varphi (S_\varphi )$ outside of vertical points and cusps, by regarding an arc of $\varphi (S_\varphi )$ as the graph of a function.
In particular, a point with second derivative zero is called an {\it inflection point}.
Since zero or non-zero of the second derivative is preserved by rotating the coordinate system, an inflection point can be defined also for vertical points.

\begin{Lem}\label{critical}
A point $p\in M$ is a critical point of $F$ (resp. $G$) if and only if $\varphi (p)$ is a vertical (resp. horizontal) point of the image of a small neighborhood of $p$ in $S_\varphi $.
\end{Lem}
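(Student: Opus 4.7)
The plan is to identify, for every singular point $p$ of $\varphi$, the (possibly cuspidal) tangent line of $\varphi(S_\varphi)$ at $\varphi(p)$ with the image of the differential $d\varphi_p:T_pM\to T_{\varphi(p)}(P\times Q)$, and then read off the slope directly from the formula $d\varphi_p(v)=(dF_p(v),dG_p(v))$ expressed in the $(f,g)$-basis of $T_{F(p)}P\oplus T_{G(p)}Q$.

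Granted that identification, the lemma is immediate. At a singular point of a stable map to a surface we have $\mathrm{rank}(d\varphi_p)=1$, so the image of $d\varphi_p$ is a one-dimensional subspace of $T_{\varphi(p)}(P\times Q)$. This subspace is vertical (parallel to $\partial_g$) if and only if the first coordinate $dF_p(v)$ vanishes for every $v\in T_pM$, i.e.\ $dF_p\equiv 0$, which is to say $p$ is a critical point of $F$. The horizontal case is symmetric, with $G$ in place of $F$.

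To carry out the identification I would argue case-by-case using the two normal forms recalled in Section~\ref{Stable}. At a fold point the model $\varphi(u,x,y)=(u,x^2\pm y^2)$ gives $S_\varphi=\{(u,0,0)\}$ and $d\varphi_p(\partial_u)=\partial_A$, so $d\varphi_p$ carries $T_pS_\varphi$ isomorphically onto $\mathrm{Im}(d\varphi_p)$, which is therefore the tangent line of $\varphi(S_\varphi)$ at $\varphi(p)$. At a cusp point the model $\varphi(u,x,y)=(u,ux-x^3+y^2)$ gives $S_\varphi=\{(3x^2,x,0)\}$; its image is the standard cuspidal curve $(3x^2,2x^3)$ whose cuspidal tangent at the origin is the $A$-axis, again coinciding with $\mathrm{Im}(d\varphi_p)=\mathbb{R}\cdot\partial_A$. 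The chart used for the normal form differs from the product chart $(f,g)$ by a local diffeomorphism of $P\times Q$ at $\varphi(p)$, but since one is comparing one-dimensional linear subspaces inside a single tangent space, this diffeomorphism transports the subspace without affecting the identification.

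The only step that requires some care is the cusp case, because $T_pS_\varphi\subset\ker d\varphi_p$ there, so the tangent to $\varphi(S_\varphi)$ at $\varphi(p)$ cannot be extracted from $d\varphi_p(T_pS_\varphi)$ alone; the higher-order behaviour of $\varphi|_{S_\varphi}$ has to be invoked to show that the cuspidal tangent nevertheless agrees with $\mathrm{Im}(d\varphi_p)$. Apart from this, the lemma is a linear-algebra reformulation of the fold and cusp normal forms combined with the decomposition of $d\varphi_p$ into $(dF_p,dG_p)$.
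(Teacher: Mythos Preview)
Your argument is sound. The paper itself does not prove this lemma in the text; it cites \cite[Lemma~10]{johnson1} for the original statement and \cite[Lemma~11]{takao3} for ``simple analytic proofs,'' noting only that those arguments (stated for $P=Q=\mathbb{R}$) go through when $P$ or $Q$ is $S^1$. Your approach---identify the (possibly cuspidal) tangent line of $\varphi(S_\varphi)$ at $\varphi(p)$ with the rank-one image of $d\varphi_p$, then read off verticality directly from $d\varphi_p=(dF_p,dG_p)$---is a clean self-contained substitute for those references.

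Two small remarks. In the fold case the model computation is not strictly necessary: since $\varphi|_{S_\varphi}$ is an immersion there, $d\varphi_p(T_pS_\varphi)$ is already a line contained in the line $\mathrm{Im}(d\varphi_p)$, hence equal to it. For the cusp case you are right that higher-order information is needed because $T_pS_\varphi\subset\ker d\varphi_p$; an alternative to the explicit model check is to observe that the tangent lines at nearby fold points $p'$ are $\mathrm{Im}(d\varphi_{p'})$ by the fold case, and these converge to $\mathrm{Im}(d\varphi_p)$ as $p'\to p$ by continuity of $p'\mapsto\mathrm{Im}(d\varphi_{p'})$ (the rank is constantly one), while their limit is by definition the cuspidal tangent.
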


\begin{Lem}\label{degeneracy}
A critical point $p$ of $F$ (resp. $G$) degenerates if and only if $p$ is a fold point of $\varphi $ and $\varphi (p)$ is a vertical (resp. horizontal) inflection point of the image of a small neighborhood of $p$ in $S_\varphi $.
\end{Lem}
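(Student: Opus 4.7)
The plan is a direct calculation in adapted local coordinates, together with a determinantal identity that separates the fold and cusp cases. Since $dF_p=0$ and $p$ is a singular point of $\varphi=(F,G)$, the Jacobian $d\varphi_p$ has rank exactly one, so $dG_p\neq 0$; I can therefore choose source coordinates $(u,x,y)$ near $p=(0,0,0)$ with $G(u,x,y)=u$. Then $\varphi=(F,u)$, the singular set is $S_\varphi=\{F_x=F_y=0\}$, and the hypothesis that $p$ is critical for $F$ becomes $F_u(p)=F_x(p)=F_y(p)=0$.

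The first step is to characterise the fold/cusp dichotomy in these coordinates. Let $H$ be the $(x,y)$-Hessian of $F$ at $p$ and $J_p$ the $2\times 3$ Jacobian of $(F_x,F_y)$ at $p$; by stability of $\varphi$ the singular set $S_\varphi$ is smooth at $p$, so $\mathrm{rank}\,J_p=2$. Its tangent line is $\ker J_p$, while $\ker d\varphi_p$ is the $(x,y)$-plane, so $p$ is a fold iff the two are transverse, iff the tangent has nonzero $u$-component, iff $\det H\neq 0$; otherwise $p$ is a cusp and $\det H=0$.

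Next I parametrise $S_\varphi$ in the fold case as $u\mapsto(u,x(u),y(u))$ by the implicit function theorem applied to $F_x=F_y=0$. Setting $\tilde F(u):=F(u,x(u),y(u))$, one finds $\tilde F'(u)=F_u(u,x(u),y(u))$, so the discriminant $\varphi(S_\varphi)$ is given by $u\mapsto(\tilde F(u),u)$ with vertical tangent at $\varphi(p)$ (reproving Lemma~\ref{critical}), and is an inflection point there exactly when $\tilde F''(0)=0$. Implicit differentiation gives $(x'(0),y'(0))^{T}=-H^{-1}(F_{xu}(p),F_{yu}(p))^{T}$, and substituting into $\tilde F''(u)=F_{uu}+F_{xu}x'(u)+F_{yu}y'(u)$ produces the key identity
\[
\det\mathrm{Hess}_pF \;=\; \det H\cdot \tilde F''(0),
\]
obtained by expanding the $3\times 3$ determinant term by term. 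Hence in the fold case $p$ is degenerate iff $\tilde F''(0)=0$ iff $\varphi(p)$ is a vertical inflection point.

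The main obstacle is the cusp case: I must exclude the possibility that $p$ is a cusp of $\varphi$, critical for $F$, and degenerate simultaneously, which is where the stability hypothesis has to be used seriously. When $\det H=0$ the identity above collapses to $\det\mathrm{Hess}_pF=-v^{T}H^{\mathrm{adj}}v$ with $v=(F_{xu}(p),F_{yu}(p))^{T}$, and since $H^{\mathrm{adj}}$ has rank one whenever $H$ does, a short computation shows this quadratic form equals $\pm(v\cdot w)^{2}$ for any nonzero $w\in\ker H$. The condition $\mathrm{rank}\,J_p=2$ says exactly that $v\notin\mathrm{im}\,H=w^{\perp}$, so $v\cdot w\neq 0$ and $p$ is non-degenerate at a stable cusp. (The subcase $H=0$ is already excluded, since then $\mathrm{rank}\,J_p\le 1$, contradicting smoothness of $S_\varphi$.) The statement for $G$ follows by swapping the roles of the two factors.
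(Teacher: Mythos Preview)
Your argument is correct. The Schur-complement identity $\det\mathrm{Hess}_pF=\det H\cdot\tilde F''(0)$ cleanly handles the fold case, and your treatment of the cusp case via the adjugate is sound: when $\mathrm{rank}\,H=1$ one has $H^{\mathrm{adj}}=\lambda\,ww^{T}$ for some nonzero scalar $\lambda$ (not literally $\pm1$, but that is immaterial), so $v^{T}H^{\mathrm{adj}}v=\lambda(v\cdot w)^{2}$ vanishes iff $v\cdot w=0$, which is excluded by $\mathrm{rank}\,J_p=2$. One small point of phrasing: the sentence ``$dF_p=0$ and $p$ is a singular point \dots\ so $d\varphi_p$ has rank exactly one'' only gives rank at most one; it is the standing stability hypothesis (so $p$ is a fold or cusp) that forces rank exactly one. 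You use stability explicitly a few lines later, so this is just an ordering issue.

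As for comparison with the paper: the paper does not give its own proof of this lemma but refers to Johnson \cite{johnson1} and to the author's earlier paper \cite{takao3}, noting that the latter supplies ``simple analytic proofs''. Your write-up is a self-contained analytic proof in that same spirit; the Hessian/Schur-complement formulation and the explicit exclusion of the degenerate-at-a-cusp possibility are exactly the kind of computation those references carry out. So there is no genuinely different route here, only the difference between citing a known calculation and performing it.
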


The above lemmas were originally described by Johnson \cite[Lemmas 10 and 11]{johnson1}, and simple analytic proofs were given by the author \cite[Lemmas 11 and 12]{takao3}.
Both Johnson and the author considered only the case of $P=Q={\mathbb R}$, but the proofs are independent of whether $P,Q$ are ${\mathbb R}$ or $S^1$.

By Lemmas \ref{critical} and \ref{degeneracy}, the function $F$ (resp. $G$) is Morse if the discriminant set $\varphi (S_\varphi )$ does not have vertical (resp. horizontal) inflection points.
Note that the $f$-coordinate ($g$-coordinate) of each vertical (resp. horizontal) point of $\varphi (S_\varphi )$ corresponds to the critical value of $F$ (resp. $G$).
It follows that the Morse function $F$ (resp. $G$) is stable if $\varphi (S_\varphi )$ does not have vertical (resp. horizontal) double tangent lines.

\begin{Cor}\label{isotopy}
A deformation of $\varphi (S_\varphi )$ by an ambient isotopy of $P\times Q$ can be realized by isotopies of $F,G$ if it keeps $\varphi (S_\varphi )$ without horizontal or vertical, inflection points and double tangent lines.
\end{Cor}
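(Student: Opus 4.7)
The plan is to define $F_t := \pi _P\circ H_t\circ \varphi $ and $G_t := \pi _Q\circ H_t\circ \varphi $, where $\{ H_t\} _{t\in [0,1]}$ is the given ambient isotopy of $P\times Q$. Then the product map $(F_t,G_t)=H_t\circ \varphi $ has singular set $S_\varphi $ and discriminant set $H_t(\varphi (S_\varphi ))$, so it realizes the prescribed deformation of the discriminant set at every time $t$. The remaining task is to verify that $\{ F_t\} $ and $\{ G_t\} $ are indeed isotopies of $F$ and $G$ in the sense of Section~\ref{Stable}, namely that they admit decompositions $F_t=h^P_t\circ F\circ H^M_t$ and $G_t=h^Q_t\circ G\circ \widetilde{H}^M_t$ for suitable ambient isotopies (with, a priori, different ambient isotopies of $M$ for the two factors).

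First I would use the hypothesis to show each $F_t$ is a stable Morse function. Since the singular set of $(F_t,G_t)=H_t\circ \varphi $ remains $S_\varphi $ and its discriminant set $H_t(\varphi (S_\varphi ))$ has no vertical inflection points by assumption, Lemmas~\ref{critical} and~\ref{degeneracy} imply that every critical point of $F_t$ is nondegenerate, hence $F_t$ is Morse. The absence of vertical double tangent lines translates to pairwise distinct $f$-coordinates among the vertical points of $H_t(\varphi (S_\varphi ))$, i.e., pairwise distinct critical values of $F_t$, so $F_t$ is stable. The symmetric argument, with horizontal in place of vertical, handles $G_t$.

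Second, I would realize the smooth family $\{ F_t\} $ of stable Morse functions as an isotopy via a parametrized Cerf/Moser construction. Because each critical point of $F_t$ stays nondegenerate, the implicit function theorem gives smooth arcs $p^i_t\in M$ of critical points, and the corresponding critical values $v^i_t:=F_t(p^i_t)\in P$ are smooth and pairwise disjoint. Choose an ambient isotopy $h^P_t$ of $P$ with $h^P_t(v^i_0)=v^i_t$, set $\widetilde{F}_t:=(h^P_t)^{-1}\circ F_t$ (so that $\widetilde{F}_t(p^i_t)\equiv v^i_0$), and solve the time-dependent equation $d\widetilde{F}_t(X_t)=-\partial _t\widetilde{F}_t$ for a smooth vector field $X_t$ on $M$; integrating $X_t$ from $id_M$ produces $H^M_t$ with $\widetilde{F}_t\circ H^M_t=F$, i.e., $F_t=h^P_t\circ F\circ H^M_t$. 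Running the same construction independently for $\{ G_t\} $ completes the proof.

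The main obstacle is this last step: the time-dependent vector field $X_t$ must extend smoothly across the critical points $p^i_t$, where both $d\widetilde{F}_t$ and $\partial _t\widetilde{F}_t$ vanish. The correct order of vanishing (coming from the Morse lemma applied parametrically) guarantees a global smooth solution, and this is precisely where the uniform nondegeneracy and the uniform distinctness of critical values of $F_t$ throughout $t\in [0,1]$ are essential. Everything else is a direct application of Lemmas~\ref{critical} and~\ref{degeneracy} to the family $\{ H_t\circ \varphi \} $.
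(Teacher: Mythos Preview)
Your proof is correct and follows essentially the same approach as the paper: define $F_t=\pi_P\circ H_t\circ\varphi$ and $G_t=\pi_Q\circ H_t\circ\varphi$, use Lemmas~\ref{critical} and~\ref{degeneracy} together with the hypothesis to see that each $F_t$ and $G_t$ is a stable Morse function, and conclude that the homotopies are isotopies. The only difference is one of detail: the paper dispatches the last step in a single line by appealing to stability (a smooth one-parameter family of stable maps is automatically an isotopy), whereas you spell out an explicit Cerf--Moser construction of the decomposition $F_t=h^P_t\circ F\circ H^M_t$.
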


\begin{proof}
Let $\{ H_t\} _{t\in [0,1]}$ be a smooth ambient isotopy of $P\times Q$ such that $H_0=id_{P\times Q}$.
By the definitions, $\{ H_t\circ \varphi \} _{t\in [0,1]}$ is an isotopy of $\varphi $ and consists of stable maps.
It induces homotopies $\{ F_t=\pi _P\circ H_t\circ \varphi \} _{t\in [0,1]}$ of $F$ and $\{ G_t=\pi _Q\circ H_t\circ \varphi \} _{t\in [0,1]}$ of $G$.
The deformed discriminant set $H_t(\varphi (S_\varphi ))$ is the discriminant set of $H_t\circ \varphi =(F_t,G_t)$ for each $t\in [0,1]$.
Since $H_t(\varphi (S_\varphi ))$ does not have horizontal or vertical, inflection points and double tangent lines, $F_t$ and $G_t$ are stable for each $t\in [0,1]$.
The homotopies $\{ F_t\} _{t\in [0,1]}$ and $\{ G_t\} _{t\in [0,1]}$ are therefore isotopies.
\end{proof}

\section{Restrictions to level surfaces}\label{Restrictions}

In this section, we consider the relation between a product map restricted to an appropriate domain and the family of the restrictions of one function to level surfaces of the other function.

We use the following notation again.
Suppose $M$ is a closed smooth $3$-manifold and $P,Q$ are either ${\mathbb R}$ or $S^1$.
Let $F:M\rightarrow P$ and $G:M\rightarrow Q$ be smooth functions, and $\varphi $ denote the product map $(F,G)$.
We do not assume $F,G$ to be Morse nor $\varphi $ to be stable at this stage.

We consider the restriction of $F$ to a level surface of $G$.
Suppose $p\in M$ is a regular point of $G$, and hence the level set $G^{-1}(G(p))$ is a regular surface near the point $p$.
The point $p$ is a critical point of $F|_{G^{-1}(G(p))}$ if and only if $p$ is a singular point of $\varphi $, which can be seen as follows.
The gradient of the restriction $F|_{G^{-1}(G(p))}$ is the projection of the gradient of $F$ to the orthogonal complement of the gradient of $G$.
It is zero if and only if the gradients of $F$ and $G$ are linearly dependent.
They are linearly dependent if and only if the differential $(d\varphi )_p$ is not surjective.

\begin{Lem}\label{fold}
A critical point $p$ of $F|_{G^{-1}(G(p))}$ is non-degenerate if and only if $p$ is a fold point of $\varphi $.
\end{Lem}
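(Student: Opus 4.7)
The plan is to choose local source coordinates in which $G$ becomes a coordinate function, so that both conditions in the lemma reduce to the non-degeneracy of a single $2\times 2$ Hessian. Since $p$ is a regular point of $G$, pick coordinates $(u,x,y)$ at $p$ on $M$ with $G(u,x,y) = y$ (translated so that $G(p) = 0$); then $\varphi(u,x,y) = (F(u,x,y), y)$ and $\Sigma := G^{-1}(G(p))$ is $\{y = 0\}$ near $p$. The criticality of $p$ for $F|_\Sigma$ becomes $F_u(p) = F_x(p) = 0$, which by the discussion preceding the lemma is precisely the singularity of $\varphi$ at $p$. Let $H$ denote the resulting Hessian of $F|_\Sigma$ at $p$, the symmetric $2\times 2$ matrix formed from $F_{uu}(p)$, $F_{ux}(p)$, $F_{xx}(p)$.

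For the ``only if'' direction, I assume $H$ is non-singular. The implicit function theorem yields a smooth family $(u^{*}(y), x^{*}(y))$ of critical points of $F(\cdot,\cdot,y)$ near $p$, and the Morse lemma with parameters then furnishes $y$-dependent coordinate changes of $(u,x)$ under which $F(U, X, y) = c(y) + \alpha U^{2} + \beta X^{2}$, with $\alpha, \beta \in \{\pm 1\}$ and $c$ smooth. The target-side shift $(f, g) \mapsto (f - c(g), g)$ eliminates $c(y)$, and swapping the two target coordinates, together with a permutation of source variables and sign changes if necessary, brings $\varphi$ into a fold normal form $(u', (x')^{2} \pm (y')^{2})$.

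For the converse, I start from the fold normal form $\varphi(\tilde u, \tilde x, \tilde y) = (\tilde u, \tilde x^{2} + \epsilon \tilde y^{2})$ in source coordinates $(\tilde u, \tilde x, \tilde y)$ and some target coordinates $(\tilde f, \tilde g)$, related to the original $(f,g)$ by a target diffeomorphism $\Psi = (A, B)$. Then $F = A(\tilde u, \tilde x^{2} + \epsilon \tilde y^{2})$ and $G = B(\tilde u, \tilde x^{2} + \epsilon \tilde y^{2})$; regularity of $G$ at $p$ forces $B_{\tilde f}(0) \neq 0$, so the implicit function theorem writes $\Sigma$ locally as a graph $\tilde u = h(\tilde x, \tilde y)$ with $h(0) = 0$ and $\nabla h(0) = 0$. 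Implicit differentiation of $B(h, \tilde x^{2} + \epsilon \tilde y^{2}) = B(0)$ gives $h_{\tilde x \tilde x}(0) = -2B_{\tilde g}(0)/B_{\tilde f}(0)$, $h_{\tilde y \tilde y}(0) = -2\epsilon B_{\tilde g}(0)/B_{\tilde f}(0)$, and $h_{\tilde x \tilde y}(0) = 0$. The chain rule applied to $F|_\Sigma(\tilde x, \tilde y) = A(h, \tilde x^{2} + \epsilon \tilde y^{2})$ then yields
$$\operatorname{Hess}(F|_\Sigma)(p) = -\frac{2\det(D\Psi)(0)}{B_{\tilde f}(0)} \begin{pmatrix} 1 & 0 \\ 0 & \epsilon \end{pmatrix},$$
which is non-singular because $\Psi$ is a diffeomorphism.

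The main obstacle will be this converse: the target-side diffeomorphism in the fold normal form typically mixes $f$ and $g$, so the level surface of $G$ is no longer a coordinate plane in the normal-form coordinates, and the restriction of $F$ must be tracked along the curved graph $\tilde u = h(\tilde x, \tilde y)$. A cleaner, more invariant alternative is to observe that the Hessian of $F|_\Sigma$ at $p$ agrees, via projection to $\operatorname{coker}(d\varphi)_p \cong \mathbb{R}$, with the intrinsic second derivative of $\varphi$ on $\ker(d\varphi)_p = T_p \Sigma$, whose non-degeneracy is the standard intrinsic characterization of a fold; this would bypass the explicit normal-form manipulations in both directions.
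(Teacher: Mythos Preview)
Your proof is correct. The paper takes a much shorter route: after making the same initial choice of adapted source coordinates in which $G$ becomes a coordinate (so that $\varphi(\xi,\eta,\tau)=(F(\xi,\eta,\tau),\tau+G(p))$ and $F|_{G^{-1}(G(p))}=F(\xi,\eta,0)$), it simply invokes Morin's characterization of fold points (Morin, \emph{Lemme~1}), which for a map of this submersion-in-one-factor form says exactly that the singular point is a fold if and only if the Hessian of $F(\xi,\eta,0)$ is non-degenerate. You instead reprove that characterization by hand: the Morse lemma with parameters for the forward direction, and the explicit Hessian computation along the curved graph $\tilde u=h(\tilde x,\tilde y)$ for the converse. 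The computations are sound and your Hessian formula is correct. Amusingly, the ``cleaner, more invariant alternative'' you sketch at the end --- non-degeneracy of the intrinsic second differential of $\varphi$ on $\ker(d\varphi)_p$ projected to $\operatorname{coker}(d\varphi)_p$ --- is precisely Morin's criterion, so the shortcut you propose is the one the paper actually takes. Your version has the virtue of being self-contained; the paper's is a one-line citation.
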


\begin{proof}
Since $p$ is a regular point of $G$, there is a local coordinate system $(\xi ,\eta ,\tau )$ of $M$ at $p=(0,0,0)$ such that $G(\xi ,\eta ,\tau )=\tau +G(p)$ and $(\xi ,\eta )$ is a local coordinate system of $G^{-1}(G(p))$ at $p$.
This gives us $\varphi (\xi ,\eta ,\tau )=(F(\xi ,\eta ,\tau ),\tau +G(p))$ and $F|_{G^{-1}(G(p))}(\xi ,\eta )=F(\xi ,\eta ,0)$.
By Morin's \cite[Lemme~1]{morin} characterization, the point $p$ is a fold point of $\varphi $ if and only if the critical point $p$ of $F(\xi ,\eta ,0)$ is non-degenerate.
\end{proof}

We consider the restrictions of $F$ to the level surfaces of $G$ in a domain $V\subset M$ which is defined as follows.
Note that there is a canonical covering ${\mathbb R}^2\rightarrow P\times Q$ by identifying $S^1$ with ${\mathbb R}/{\mathbb Z}$.
Let $\bar{R} \subset {\mathbb R}^2$ be the region $\{ (f,g)\in {\mathbb R}^2\mid h_-(g)\leq f\leq h_+(g),\ g_-\leq g\leq g_+\} $, where $g_-,g_+\in {\mathbb R}$ are constants such that $g_-<g_+$ and $h_-,h_+:{\mathbb R}\rightarrow {\mathbb R}$ are smooth functions such that $h_-(g)<h_+(g)$ for every $g\in [g_-,g_+]$.
We assume that $\bar{R} $ is embedded to $R\subset P\times Q$ by the covering map.
Let $V$ be a connected component of the preimage $\varphi ^{-1}(R)$.
From now on, we consider $\varphi ,F,G$ restricted to $V$, which allows us to assume that $P=Q={\mathbb R}$.
We assume that $G$ does not have critical points in $V$, and that the discriminant set of $\varphi $ does not intersect the two edges $\{ f=h_-(g),h_+(g),\ g_-\leq g\leq g_+\} $ of $R$.
Each level set $G^{-1}(g)\cap V$ is then a regular surface whose boundaries are regular level curves of $F|_{G^{-1}(g)}$.
The space $V$ is therefore a $\Sigma $-bundle over $[g_-,g_+]$, namely the direct product $\Sigma \times [g_-,g_+]$.
Here $\Sigma $ is a compact connected surface and each $\Sigma \times \{ g\} $ is the level surface $G^{-1}(g)\cap V$.

The restrictions of $F$ to the level surfaces of $G$ determine a homotopy $\{ \alpha _t:\Sigma \rightarrow {\mathbb R}\} _{t\in [g_-,g_+]}$.
That is to say, $\alpha _t(\sigma )=F(\sigma ,t)$ for each point $(\sigma ,t)$ in $\Sigma \times [g_-,g_+]=V$.
The range of each $\alpha _t$ is contained in $[h_-(t),h_+(t)]$, and each component of $\partial \Sigma $ is either at the minimal level $h_-(t)$ or at the maximal level $h_+(t)$.
In particular, $\{ \alpha _t\} _{t\in [g_-,g_+]}$ preserves the sets of locally minimal and locally maximal components of $\partial \Sigma $.
By Lemma~\ref{fold} and the definition of a quasi-isotopy, we have the following.

\begin{Cor}\label{fold_cor}
The homotopy $\{ \alpha _t\} _{t\in [g_-,g_+]}$ is a quasi-isotopy if and only if the singular set $S_\varphi \cap V$ consists only of fold points.
\end{Cor}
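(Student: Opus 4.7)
The plan is to reduce the statement to a pointwise application of Lemma~\ref{fold}, combined with the correspondence between singular points of $\varphi|_V$ and critical points of the restrictions $F|_{G^{-1}(g)\cap V}$ that was already established in the paragraph preceding Lemma~\ref{fold}. The crucial point is that since $G$ has no critical points in $V$, a point $(\sigma,t)\in \Sigma \times [g_-,g_+]=V$ is a singular point of $\varphi$ if and only if $\sigma $ is a critical point of $\alpha _t$, and by Lemma~\ref{fold} such a singular point is a fold point precisely when the corresponding critical point of $\alpha _t$ is non-degenerate.

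For the forward direction, suppose $S_\varphi \cap V$ consists only of fold points. Then for each $t\in [g_-,g_+]$, every critical point of $\alpha _t$ corresponds to a fold point of $\varphi $, hence is non-degenerate by Lemma~\ref{fold}. Combined with the boundary behaviour already noted in the setup (each component of $\partial \Sigma $ is sent by $\alpha _t$ to the constant value $h_-(t)$ or $h_+(t)$), this shows each $\alpha _t$ is a Morse function, and the smooth family $\{ \alpha _t\} _{t\in [g_-,g_+]}$ is therefore a quasi-isotopy by the definition given in Section~\ref{Stable}. Conversely, if $\{ \alpha _t\} _{t\in [g_-,g_+]}$ is a quasi-isotopy, then each $\alpha _t$ is Morse and every critical point of every $\alpha _t$ is non-degenerate; the same correspondence together with Lemma~\ref{fold} then forces every singular point of $\varphi $ in $V$ to be a fold point.

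There is no real obstacle: the argument is essentially a bookkeeping exercise that pushes Lemma~\ref{fold} through the trivialisation of $V$ as $\Sigma \times [g_-,g_+]$. The only subtlety worth emphasising is the surjectivity of the correspondence, namely that every singular point of $\varphi |_V$ actually arises as a critical point of one of the $\alpha _t$; this is automatic because $G$ has no critical points in $V$, so every $p\in V$ lies on exactly one level surface $G^{-1}(t)\cap V=\Sigma \times \{ t\} $, and the restriction of $F$ to this surface is precisely $\alpha _t$.
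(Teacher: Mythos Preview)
Your argument is correct and is exactly the approach the paper intends: the paper simply states ``By Lemma~\ref{fold} and the definition of a quasi-isotopy, we have the following'' without further elaboration, and you have spelled out precisely those details. The only cosmetic point is that your ``forward direction'' is the paper's ``if'' direction rather than its ``only if'' direction, but both implications are handled correctly.
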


\noindent
The ``only if" direction of this corollary extends to the following.

\begin{Lem}\label{stable}
If the homotopy $\{ \alpha _t\} _{t\in [g_-,g_+]}$ is generic, the map $\varphi $ is stable in $V$.
\end{Lem}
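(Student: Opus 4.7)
The plan is to verify Mather's characterization (Theorem~\ref{mather}) locally in $V$, reducing the stability statement to a case analysis over the exceptional moments of the generic homotopy $\{\alpha_t\}$. The singular points of $\varphi$ in $V = \Sigma\times[g_-,g_+]$ are precisely the pairs $(\sigma,t)$ such that $\sigma$ is a critical point of $\alpha_t$, by the discussion preceding Lemma~\ref{fold}, so this is a purely local analysis built on Lemma~\ref{fold}.

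First, I would classify the singular points. If $\sigma$ is a non-degenerate critical point of $\alpha_t$, then $(\sigma,t)$ is a fold point of $\varphi$ by Lemma~\ref{fold}. Otherwise, genericity forces $(t,\sigma)$ to be a birth or death $(o,\sigma)$, with local model $\alpha_\tau(\xi,\eta)=\tau\xi_\tau-\xi_\tau^3+\eta_\tau^2$. The family $(\xi,\eta)\mapsto(\xi_\tau,\eta_\tau)$ being smooth in $\tau$ means $(\xi,\eta,\tau)\mapsto(\xi_\tau,\eta_\tau,\tau)$ is a diffeomorphism of the source near $(0,0,o)$; renaming $(x,y,u)=(\xi_\tau,\eta_\tau,\tau)$ and using a suitable translation in the product target, $\varphi$ acquires the form $(ux-x^3+y^2,\,u)$. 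Relabeling the two target coordinates (and the two remaining source coordinates) yields exactly the cusp normal form $\varphi(u,x,y)=(u,ux-x^3+y^2)$ of Section~\ref{Stable}, so $(o,\sigma)$ is a cusp point of $\varphi$. Together with the fold case, this shows $S_\varphi\cap V$ consists only of fold points and cusp points.

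Second, I would verify the crossing conditions on $\varphi|_{S_\varphi\cap V}$. Because $\varphi$ preserves the $g$-coordinate up to translation, a double point of $\varphi|_{S_\varphi}$ requires two distinct critical points of the same $\alpha_t$ sharing their $F$-value. By the genericity hypothesis this can only occur at passings $(o,\{\sigma,\sigma'\})$, and at such a point both $\sigma$ and $\sigma'$ are non-degenerate, hence map to fold points of $\varphi$. In particular no double points occur at cusps. For the remaining normal-crossing condition, I would observe that the two local branches of $\varphi(S_\varphi)$ near $\varphi(\sigma,o)=\varphi(\sigma',o)$ are parametrized by $t\mapsto(\text{crit.\ val.\ of }\alpha_t|_U,\,t)$ and the analogous expression for $U'$, and that the transverse-crossing clause in the definition of a passing is exactly the statement that these two parametrized curves meet transversely in the target.

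The main obstacle is not conceptual but notational: aligning the coordinate conventions of the birth/death model (source coordinates $(\xi,\eta,\tau)$, target $\mathbb{R}$ for $F$, with $G$ recovered from the bundle structure on $V$) with those of the cusp normal form in Section~\ref{Stable}, and making the identification of the transverse-crossing condition with the normal-crossing condition completely explicit. Once those identifications are set up, both verifications reduce to inspection of the given local models together with Lemma~\ref{fold}, and no further analytic work is required.
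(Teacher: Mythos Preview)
Your proposal is correct and follows essentially the same approach as the paper: identify births/deaths with cusp points via the coordinate change $(u,x,y)=(\tau,\xi_\tau,\eta_\tau)$, invoke Lemma~\ref{fold} (equivalently the ``only if'' direction of Corollary~\ref{fold_cor}) for the fold points, and then read off the second condition of Theorem~\ref{mather} from the critical-value constraints built into the definition of a generic homotopy. The paper's version is terser---it compresses your second step into the single sentence ``the conditions of a generic homotopy about the critical values imply the second condition in Theorem~\ref{mather}''---whereas you spell out explicitly why double points occur only at passings and why the transverse-crossing clause for a passing is exactly the normal-crossing requirement.
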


\begin{proof}
At each birth or death, $\{ \alpha _t\} _{t\in [g_-,g_+]}$ has the local form $\alpha _\tau (\xi ,\eta )=\tau \xi _\tau -\xi _\tau ^3+\eta _\tau ^2$ with the notation of Section~\ref{Stable}.
Choosing a local coordinate system $(u,x,y)$ of $M$ as $u=\tau $, $x=\xi _\tau $, $y=\eta _\tau $, the map $\varphi $ has the local form $\varphi (u,x,y)=(u,ux-x^3+y^2)$, which is of a cusp point.
Taking this together with the ``only if" direction of Corollary~\ref{fold_cor}, the singular set $S_\varphi \cap V$ consists only of fold points and cusp points.
The conditions of a generic homotopy about the critical values imply the second condition in Theorem~\ref{mather}.
\end{proof}

The discriminant set $\varphi (S_\varphi \cap V)$ is the so-called {\it Cerf graphic} of $\{ \alpha _t\} _{t\in [g_-,g_+]}$.
That is to say, the intersection of $\varphi (S_\varphi \cap V)$ with each line $l_t=\{ (f,g)\in {\mathbb R}^2\mid g=t\} $ corresponds to the critical values of $\alpha _t$, and we can read from $\varphi (S_\varphi \cap V)$ how the critical values of $\alpha _t$ moves with $t$.

We can read more about the behavior of $\{ \alpha _t\} _{t\in [g_-,g_+]}$ from the Stein factorization $q_\varphi (V)$.
For a general homotopy $\{ \beta _t:\Sigma \rightarrow {\mathbb R}\} _{t\in [g_-,g_+]}$, we call the Stein factorization of the map $(\sigma ,t)\mapsto (\beta _t(\sigma ),t)$ from $\Sigma \times [g_-,g_+]$ to ${\mathbb R}^2$ the {\it Cerf complex} of $\{ \beta _t\} _{t\in [g_-,g_+]}$.
Note that for each $t\in [g_-,g_+]$, the intersection of the Cerf complex $q_\varphi (V)$ of $\{ \alpha _t\} _{t\in [g_-,g_+]}$ with the preimage $\bar{\varphi }^{-1}(l_t)$ is the Stein factorization $W_{\alpha _t}$ of $\alpha _t$, and that the composition $\pi _P\circ \bar{\varphi }|_{q_\varphi (V)\cap \bar{\varphi }^{-1}(l_t)}$ is the map $\bar{\alpha _t}:W_{\alpha _t}\rightarrow {\mathbb R}$.
Suppose $\varphi $ is stable in $V$ and $l_t$ is disjoint from cusps and crossing points of the discriminant set $\varphi (S_\varphi \cap V)$.
The function $\alpha _t$ is Morse by Lemma~\ref{fold}, and the critical points have pairwise distinct values.
The Stein factorization $W_{\alpha _t}=q_\varphi (V)\cap \bar{\varphi }^{-1}(l_t)$ is then a graph whose vertex has valence $1$, $2$ or $3$.
Here, points in $q_\varphi (S_\varphi \cap V)\cap \bar{\varphi }^{-1}(l_t)$ are considered as vertices of the graph $W_{\alpha _t}$.
We remark that $W_{\alpha _t}$ has no valence $2$ vertices if $M$ is orientable.
We can see that a vertex of valence $2$ or $3$ corresponds to an index $1$ critical point of $\alpha _t$.
Regarding $\bar{\alpha _t}$ as a height function, a locally minimal (resp. locally maximal) valence $1$ vertex corresponds to an index $0$ (resp. $2$) critical point of $\alpha _t$ except that those at the minimal level $h_-(t)$ (resp. the maximal level $h_+(t)$) correspond to minimal (resp. maximal) components of $\partial \Sigma $.

For example, consider the situation of the bottom left of Figure~\ref{fig_moves}.
We can choose a parallelogram $R$ as in the top of Figure~\ref{fig_swallow} after an appropriate isotopy of $\varphi (S_\varphi )$.
There exists a component $V$ of the preimage $\varphi ^{-1}(R)$ containing the two cusp points.
The left of the bottom four rows of Figure~\ref{fig_swallow} shows the possible structures of $q_\varphi (V)$, and the right shows the corresponding structures of $W_{\alpha _t}$ for $t=g_-,g^-,g^+,g_+$.
We remark that the structures as in the bottom row may not appear if $M$ is orientable.

\begin{figure}[ht]
\begin{picture}(270,470)(0,0)
\put(70,380){\includegraphics{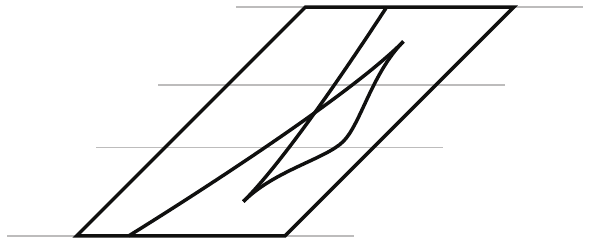}}
\put(243,444){$l_{g_+}$}
\put(221,422){$l_{g^+}$}
\put(203,403){$l_{g^-}$}
\put(179,378){$l_{g_-}$}
\put(0,285){\includegraphics{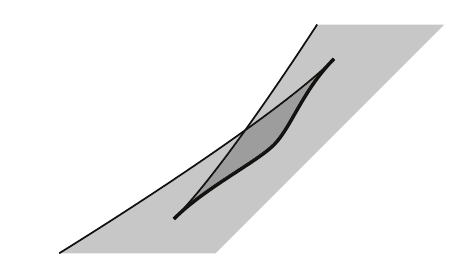}}
\put(0,190){\includegraphics{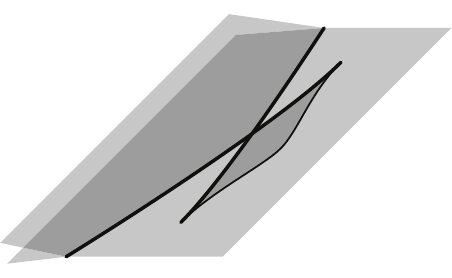}}
\put(0,95){\includegraphics{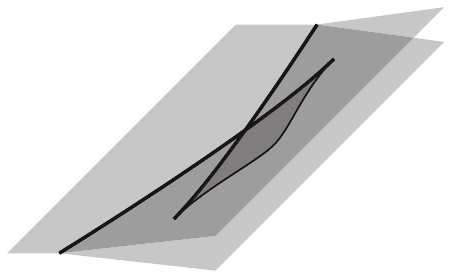}}
\put(0,0){\includegraphics{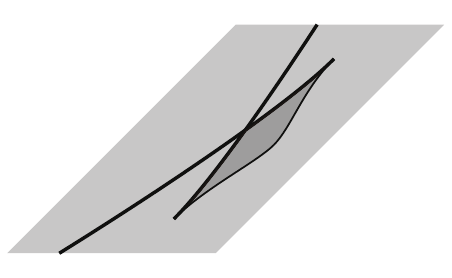}}
\put(140,285){\includegraphics{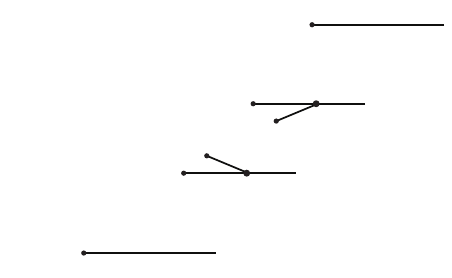}}
\put(140,190){\includegraphics{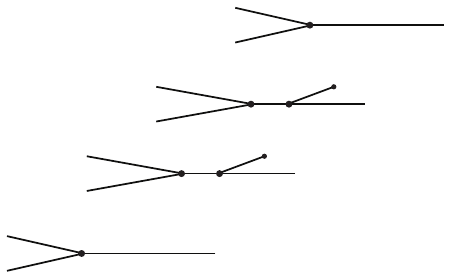}}
\put(140,95){\includegraphics{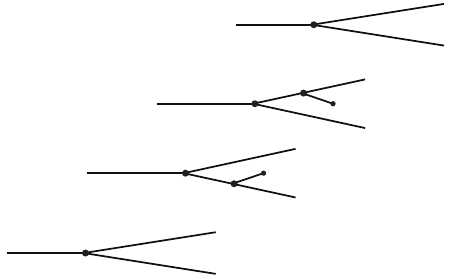}}
\put(140,0){\includegraphics{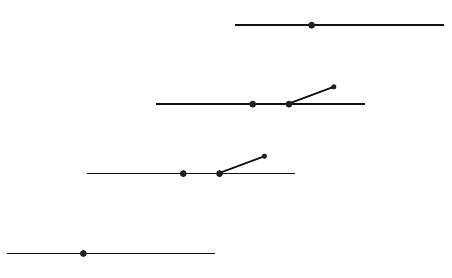}}
\end{picture}
\caption{A choice of $R$ and the four possible structures of $q_\varphi (V)$ and the corresponding structures of $W_{\alpha _t}$.}
\label{fig_swallow}
\end{figure}

\section{Moves of product maps}

For the proof of Theorem~\ref{moves}, we make more general statements about moves of singularities of product maps.

We use the following notation.
Suppose $M$ is a closed smooth $3$-manifold and $P,Q$ are either ${\mathbb R}$ or $S^1$.
Let $F:M\rightarrow P$ and $G:M\rightarrow Q$ be smooth functions, and $\varphi $ denote the product map $(F,G)$.
Let $R\subset P\times Q$, $V\subset M$ and $\{ \alpha _t:\Sigma \rightarrow {\mathbb R}\} _{t\in [g_-,g_+]}$ be as described in Section~\ref{Restrictions}.
We consider $\varphi ,F,G$ restricted to $V$ and we can assume that $P=Q={\mathbb R}$.
We assume that $\varphi $ is stable in $V$, and the following:
\begin{enumerate}
\item The region $R$ is a parallelogram
\begin{equation*}
\left\{ (f,g)\in {\mathbb R}^2\mid f_-+a(g-g_-)\leq f\leq f_++a(g-g_-),\ g_-\leq g\leq g_+\right\} ,
\end{equation*}
where $f_-<f_+$, $g_-<g_+$, $a\in {\mathbb R}$ and $f_+<f_-+a(g_+-g_-)$.
\item The discriminant set $\varphi (S_\varphi \cap V)$ has neither horizontal points nor vertical points.
\item The Stein factorization $q_\varphi (V)$ has at least one edge which maps to one of the edges $\{ f=f_\pm +a(g-g_-),\ g_-\leq g\leq g_+\} $ of $R$.
\end{enumerate}

We can then regard a modification of the homotopy $\{ \alpha _t\} _{t\in [g_-,g_+]}$ as an isotopy of the function $F$ in the sense of the following proposition.

\begin{Prop}\label{cerf}
Let $\{ \beta _t:\Sigma \rightarrow {\mathbb R}\} _{t\in [g_-,g_+]}$ be a generic homotopy from $\beta _{g_-}=\alpha _{g_-}$ to $\beta _{g_+}=\alpha _{g_+}$.
Then there exists a smooth function $\tilde{F}$ isotopic to $F$ such that $\tilde{F}|_{M\setminus V}=F|_{M\setminus V}$, and $\tilde{\varphi }=(\tilde{F},G)$ is stable, and the Stein factorization $q_{\tilde{\varphi }}(V)$ is homeomorphic to the Cerf complex of $\{ \beta _t\} _{t\in [g_-,g_+]}$.
\end{Prop}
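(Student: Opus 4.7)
My plan is to construct $\tilde F$ by transplanting $\{\beta_t\}$ into $V\cong\Sigma\times[g_-,g_+]$ in place of $\{\alpha_t\}$, after first modifying it near $\partial V$ so that it matches the existing data of $F$, and to set $\tilde F=F$ on $M\setminus V$. The modification has two parts. First, using $\beta_{g_\pm}=\alpha_{g_\pm}$ together with the fact that no birth, death, or passing occurs at $t=g_\pm$, reparametrize $t$ so that $\beta_t\equiv\alpha_t$ for $t$ in small neighborhoods of $g_-$ and $g_+$; this preserves the Cerf complex of $\{\beta_t\}$ up to homeomorphism. Second, match $\beta_t$ to $\alpha_t$ on $\partial\Sigma$: each boundary component of $\Sigma$ is sent by $\alpha_t$ to the affine value $h_\pm(t)=f_\pm+a(t-g_-)$, and the same must hold of $\tilde F$ for $\tilde F=F$ along $\partial\Sigma\times[g_-,g_+]$. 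Fix a collar $C=\partial\Sigma\times[0,\varepsilon]\subset\Sigma$ disjoint from the critical points of every $\alpha_t$ (possible by Assumption~(2)) and a bump $\rho$ that is $1$ on $\partial\Sigma$ and vanishes outside $C$, and replace $\beta_t$ by
\[
\beta_t'(\sigma) = \beta_t(\sigma) + \rho(\sigma)\bigl(\alpha_t(\sigma)-\beta_t(\sigma)\bigr).
\]
For $\varepsilon$ small enough this perturbation creates and destroys no critical points in $C$, so $\{\beta_t'\}$ is still generic with Cerf complex homeomorphic to that of $\{\beta_t\}$; Assumption~(3) ensures that the set of boundary components to be adjusted is nonempty.

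Defining $\tilde F(\sigma,t)=\beta_t'(\sigma)$ on $V$ and $\tilde F=F$ on $M\setminus V$ now yields a smooth function, since the two pieces agree in a neighborhood of $\partial V$. Lemma~\ref{stable} gives that $\tilde\varphi=(\tilde F,G)$ is stable on $V$, and hence globally stable because $\tilde\varphi=\varphi$ on $M\setminus V$. By construction $q_{\tilde\varphi}(V)$ is the Cerf complex of $\{\beta_t'\}$, and so homeomorphic to that of $\{\beta_t\}$.

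The main obstacle is to promote this construction to an isotopy from $F$ to $\tilde F$ in the sense $F_s=H^P_s\circ F\circ H^M_s$. The straight-line interpolation $(1-s)F+s\tilde F$ is a smooth homotopy but need not be an isotopy, since the cusps of $\tilde\varphi$ produce critical points of $\tilde F$ absent from $F$ (by Assumption~(2) the discriminant of $\varphi$ has no vertical points, whereas that of $\tilde\varphi$ does, at every birth/death). I plan instead to interpolate through a generic two-parameter family $\{\gamma_{t,s}\}$ from $\{\alpha_t\}$ to $\{\beta_t'\}$ arranged so that the associated product map is stable for $s$ outside a finite set of codimension-one transitions, and then to concatenate the isotopies between successive stable maps supplied by the openness and right--left equivalence properties in the definition of a stable map. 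Keeping the support of $\{\gamma_{t,s}\}$ in the interior of $V$ allows the ambient isotopies of $M$ and $P$ to be localized to $V$ and $F(V)$ respectively, so that $\tilde F=F$ off $V$ is preserved throughout. Carefully controlling the codimension-one transitions in the two-parameter family so that this piecewise gluing defines a genuine isotopy is the technically most delicate part of the proof.
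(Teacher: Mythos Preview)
Your construction of $\tilde F$ has a fatal defect: as you yourself observe, it acquires critical points in $V$ that $F$ does not have. Indeed $\tilde F(\sigma,t)=\beta'_t(\sigma)$ has a critical point precisely where $d_\sigma\beta'_t=0$ and $\partial_t\beta'_t=0$ simultaneously, i.e.\ wherever the Cerf graphic of $\{\beta'_t\}$ has a vertical tangent, and in particular at every birth and death. By condition~(2) and Lemma~\ref{critical}, $F$ has no critical points in $V$. Since any isotopy $F_s=H^P_s\circ F\circ H^M_s$ carries critical points to critical points, your $\tilde F$ simply cannot be isotopic to $F$ whenever $\{\beta_t\}$ has a birth or a death. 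Your proposed two-parameter interpolation does not help: the difficulty is not how to connect $F$ to $\tilde F$, but that the endpoint $\tilde F$ is already the wrong function. (Separately, ``concatenating isotopies between successive stable maps'' across codimension-one strata is not meaningful: crossing such a stratum changes the right--left equivalence class of the product map, so there is no isotopy there to concatenate.)

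The paper's proof turns on an idea absent from your proposal. Condition~(1), which you never invoke, makes $R$ a \emph{strictly slanted} parallelogram---note the inequality $f_+<f_-+a(g_+-g_-)$. After normalizing to $a=1$, $f_-=0$, $f_+=\tfrac13$, $g_-=0$, $g_+=1$, the paper replaces $\{\beta_t\}$ by an explicit rescaled and shifted homotopy $\{\hat\beta_t\}$ with $\partial_t\hat\beta_t(\sigma)>0$ for every $(\sigma,t)$; the slant is exactly what provides the room to do this while keeping the range inside $[t,\tfrac13+t]$. This positivity forces the resulting $\tilde F$ to have \emph{no} critical points in $V$, matching $F$. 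Condition~(3) is then used for its real purpose (not the one you assign it): it guarantees $\partial\Sigma\neq\emptyset$, so $V\cong\Sigma\times[0,1]$ is a handlebody. Since $F$ and $\tilde F$ agree on $\partial V$ and are both critical-point-free on $V$, their level sets in $V$ are governed by the common boundary data $F|_{\partial V}=\tilde F|_{\partial V}$, and the injectivity of the handlebody mapping class group into that of its boundary surface yields the isotopy. Your boundary-matching idea resembles one auxiliary step in the paper, but without the tilting construction the isotopy conclusion is unreachable.
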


\begin{proof}
We can assume that $a=1$, $f_-=0$, $f_+=\frac{1}{3}$, $g_-=0$ and $g_+=1$ after an isotopy of $\varphi (S_\varphi )$ by the condition (1) and Corollary~\ref{isotopy}.
Let $\bar{\beta }_t(\sigma )=\beta _t(\sigma )-t$ for $t\in [0,1]$, $\sigma \in \Sigma $, and let $c=\left| {\rm inf}\left\{ \frac{\partial }{\partial t}\bar{\beta }_t(\sigma )\mid t\in [0,1], \ \sigma \in \Sigma \right\} \right| $.
We define a continuous homotopy $\{ \hat{\beta }_t\} _{t\in [0,1]}$ by
\begin{equation*}
\hat{\beta }_t(\sigma )=
\begin{cases}
\left( 1-\frac{6c+2}{2c+1}t\right) \bar{\beta }_0(\sigma )+\frac{4}{3}t & \left( t\in \left[ 0,\frac{1}{3}\right] \right) \\ \frac{1}{6c+3}\bar{\beta }_{3t-1}(\sigma )+t+\frac{1}{9} & \left( t\in \left[ \frac{1}{3},\frac{2}{3}\right] \right) \\ \left( \frac{6c+2}{2c+1}t-\frac{4c+1}{2c+1}\right) \bar{\beta }_1(\sigma )+\frac{2}{3}t+\frac{1}{3} & \left( t\in \left[ \frac{2}{3},1\right] \right) 
\end{cases}.
\end{equation*}

In the first interval $\left[ 0,\frac{1}{3}\right] $, the derivative $\frac{\partial }{\partial t}\hat{\beta }_t(\sigma )$ is positive since
\begin{equation*}
\frac{\partial }{\partial t}\hat{\beta }_t(\sigma )=-\frac{6c+2}{2c+1}\bar{\beta }_0(\sigma )+\frac{4}{3}=-\frac{6c+2}{2c+1}\alpha _0(\sigma )+\frac{4}{3}
\end{equation*}
and $0\leq \alpha _0(\sigma )\leq \frac{1}{3}$.
It is positive also in the last interval $\left[ \frac{2}{3},1\right] $ similarly.
In the middle interval, since
\begin{equation*}
\frac{\partial }{\partial t}\hat{\beta }_t(\sigma )=\frac{1}{6c+3}\frac{\partial }{\partial t}\bar{\beta }_{3t-1}(\sigma )+1
\end{equation*}
and
\begin{equation*}
-3c\leq \frac{\partial }{\partial t}\bar{\beta }_{3t-1}(\sigma )\leq 3c,
\end{equation*}
we have
\begin{equation*}
\frac{1}{2}<\frac{-3c}{6c+3}+1\leq \frac{\partial }{\partial t}\hat{\beta }_t(\sigma )\leq \frac{3c}{6c+3}+1<\frac{3}{2}.
\end{equation*}
The derivative $\frac{\partial }{\partial t}\hat{\beta }_t(\sigma )$ is thus positive for $t\in [0,1]$ except that the right and left derivatives may disagree at $t=\frac{1}{3}, \ \frac{2}{3}$.

Note that the ranges of $\hat{\beta }_0,\hat{\beta }_{\frac{1}{3}},\hat{\beta }_{\frac{2}{3}},\hat{\beta }_1$ are bounded as follows:
\begin{gather*}
\hat{\beta }_0(\sigma )=\bar{\beta }_0(\sigma )=\beta _0(\sigma )=\alpha _0(\sigma )\in \left[ 0,\frac{1}{3}\right] ,\\
\hat{\beta }_{\frac{1}{3}}(\sigma )=\frac{1}{6c+3}\bar{\beta }_0(\sigma )+\frac{4}{9}\in \left[ \frac{4}{9},\frac{1}{18c+9}+\frac{4}{9}\right] \subset \left[ \frac{4}{9},\frac{5}{9}\right] ,\\
\hat{\beta }_{\frac{2}{3}}(\sigma )=\frac{1}{6c+3}\bar{\beta }_1(\sigma )+\frac{7}{9}\in \left[ \frac{7}{9},\frac{1}{18c+9}+\frac{7}{9}\right] \subset \left[ \frac{7}{9},\frac{8}{9}\right] ,\\
\hat{\beta }_1(\sigma )=\bar{\beta }_1(\sigma )+1=\beta _1(\sigma )=\alpha _1(\sigma )\in \left[ 1,\frac{4}{3}\right] .
\end{gather*}
Since $\{ \hat{\beta }_t\} _{t\in [0,\frac{1}{3}]}$ connects $\hat{\beta }_0$ and $\hat{\beta }_{\frac{1}{3}}$ linearly, we can see that $\hat{\beta }_t(\sigma )\in \left[ t,\frac{1}{3}+t\right] $ for $t\in \left[ 0,\frac{1}{3}\right] $.
The same holds for $t\in \left[ \frac{2}{3},1\right] $.
We can see that the same holds also for $t\in \left[ \frac{1}{3},\frac{2}{3}\right] $ since $\hat{\beta }_{\frac{1}{3}}(\sigma )\in \left[ \frac{4}{9},\frac{5}{9}\right] $, $\hat{\beta }_{\frac{2}{3}}(\sigma )\in \left[ \frac{7}{9},\frac{8}{9}\right] $ and $\frac{1}{2}<\frac{\partial }{\partial t}\hat{\beta }_t(\sigma )<\frac{3}{2}$.
The range of $\hat{\beta }_t$ is thus contained in $\left[ t,\frac{1}{3}+t\right] $ for $t\in [0,1]$ as well as that of $\alpha _t$.

Note that the differential $(d\hat{\beta }_t)_\sigma $ is a scalar multiplication of $(d\beta _{t'})_\sigma $, where $t'=0$ if $t\in \left[ 0,\frac{1}{3}\right] $, $t'=3t-1$ if $t\in \left[ \frac{1}{3},\frac{2}{3}\right] $ or $t'=1$ if $t\in \left[ \frac{2}{3},1\right] $.
The homotopy $\{ \hat{\beta }_t\} _{t\in [0,1]}$ therefore keeps $\partial \Sigma $ without critical points as well as $\{ \beta _t\} _{t\in [0,1]}$.
In particular, $\{ \hat{\beta }_t\} _{t\in [0,1]}$ preserves the sets of locally minimal and locally maximal components of $\partial \Sigma $.
By deforming $\{ \hat{\beta }_t\} _{t\in [0,1]}$ in a collar neighborhood of $\partial \Sigma $, we can obtain a homotopy $\{ \tilde{\beta }_t\} _{t\in [0,1]}$ such that the locally minimal (resp. the locally maximal) components of $\partial \Sigma $ are at the level $t$ (resp. $\frac{1}{3}+t$) for each $t\in [0,1]$.

The homotopy $\{ \tilde{\beta }_t\} _{t\in [0,1]}$ then determines a continuous function $\tilde{F}:M\rightarrow P$.
That is to say, $\tilde{F}(\sigma ,t)=\tilde{\beta }_t(\sigma )$ for each point $(\sigma ,t)$ in $V=\Sigma \times [0,1]$ and $\tilde{F}|_{M\setminus V}=F|_{M\setminus V}$.
By arbitrarily small deformation in $V$, we can make $\{ \tilde{\beta }_t\} _{t\in [0,1]}$ generic and $\tilde{F}$ smooth keeping the differential $\frac{\partial }{\partial t}\tilde{F}(\sigma ,t)=\frac{\partial }{\partial t}\tilde{\beta }_t(\sigma )$ positive.
The map $\tilde{\varphi }=(\tilde{F},G)$ is then stable by Lemma~\ref{stable}.
The Stein factorization $q_{\tilde{\varphi }}(V)$ is homeomorphic to the Cerf complex of $\{ \beta _t\} _{t\in [g_-,g_+]}$ by the constructions of $\{ \bar{\beta }_t\} _{t\in [0,1]}$, $\{ \hat{\beta }_t\} _{t\in [0,1]}$ and $\{ \tilde{\beta }_t\} _{t\in [0,1]}$.

The condition (3) implies that $\Sigma $ has non-empty boundary, and hence $V=\Sigma \times [0,1]$ is a handlebody.
By the condition (2) and Lemma~\ref{critical}, the original function $F$ has no critical points in the handlebody $V$.
By $\frac{\partial }{\partial t}\tilde{F}(\sigma ,t)>0$, the new function $\tilde{F}$ also has no critical points in $V$.
The topologies of the level sets $F^{-1}(f)\cap V$ and $\tilde{F}^{-1}(f)\cap V$ change with $f$ according to singularities of $F|_{\partial V}$ and $\tilde{F}|_{\partial V}$, respectively.
Since $F|_{\partial V}$ and $\tilde{F}|_{\partial V}$ coincide, there is a homeomorphism of $V$ which takes each $F^{-1}(f)\cap V$ to $\tilde{F}^{-1}(f)\cap V$.
It is known that the canonical homomorphism from the mapping class group of a handlebody to the mapping class group of the boundary surface is injective.
It follows that the homeomorphism is isotopic to the identity, and so $\tilde{F}$ is isotopic to $F$.
\end{proof}

\begin{Cor}\label{eliminate}
Assume the following in addition to the above.
Then there exists a smooth function $\tilde{F}$ isotopic to $F$ such that $\tilde{F}|_{M\setminus V}=F|_{M\setminus V}$, and $\tilde{\varphi }=(\tilde{F},G)$ is a stable map without cusp points in $V$.
\begin{enumerate}
\setcounter{enumi}{3}
\item The discriminant set $\varphi (S_\varphi \cap V)$ has no cusps and no crossing points on the two edges $\{ f_-+a(g-g_-)\leq f\leq f_++a(g-g_-),\ g=g_-,g_+\} $ of $R$.
\item The intersections of the Stein factorization $q_\varphi (V)$ with the preimages $\bar{\varphi }^{-1}(l_{g_-})$, $\bar{\varphi }^{-1}(l_{g_+})$ have the same numbers of locally minimal valence $1$ vertices, valence $2$ or $3$ vertices and locally maximal valence $1$ vertices.
\end{enumerate}
\end{Cor}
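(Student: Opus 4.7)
The strategy is to apply Proposition~\ref{cerf} with a generic homotopy $\{\beta_t\}_{t\in[g_-,g_+]}$ from $\alpha_{g_-}$ to $\alpha_{g_+}$ chosen to have no births and no deaths. Births and deaths are precisely what give rise, via the proof of Lemma~\ref{stable}, to the local cusp model $\alpha_\tau(\xi,\eta)=\tau\xi_\tau-\xi_\tau^3+\eta_\tau^2$; if they are absent, the Cerf complex of $\{\beta_t\}$, and hence the Stein factorization $q_{\tilde\varphi}(V)$ produced by Proposition~\ref{cerf}, contains no cusps, so $\tilde\varphi$ has no cusp points in $V$. By definition, a generic homotopy with no births or deaths is a quasi-isotopy, and the task reduces to producing a quasi-isotopy from $\alpha_{g_-}$ to $\alpha_{g_+}$.

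For this, I would invoke Maksymenko's Theorem~\ref{maksymenko}, which requires that $\alpha_{g_-}$ and $\alpha_{g_+}$ have the same number of critical points of each index and the same sets of locally minimal and locally maximal components of $\partial\Sigma$. Condition (4) guarantees that the discriminant set has neither cusps nor crossing points on $l_{g_-}$ or $l_{g_+}$, so by Lemma~\ref{fold} both $\alpha_{g_-}$ and $\alpha_{g_+}$ are Morse functions with pairwise distinct critical values, and the intersections $W_{\alpha_{g_\pm}}=q_\varphi(V)\cap\bar\varphi^{-1}(l_{g_\pm})$ are trivalent graphs of the kind described at the end of Section~\ref{Restrictions}.

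Using the dictionary recalled there, valence $2$ or $3$ vertices of $W_{\alpha_{g_\pm}}$ correspond bijectively to index $1$ critical points of $\alpha_{g_\pm}$, locally minimal valence $1$ vertices not at level $h_-(g_\pm)$ correspond to index $0$ critical points, those at level $h_-(g_\pm)$ correspond to locally minimal components of $\partial\Sigma$, and symmetrically at the top. Because the homotopy $\{\alpha_t\}_{t\in[g_-,g_+]}$ preserves the labeling of $\partial\Sigma$ into locally minimal and locally maximal components, the number of valence $1$ vertices of $W_{\alpha_{g_\pm}}$ at the extreme levels is the same for $g_-$ and $g_+$. Subtracting these from the totals equated by condition (5) yields matching counts of index $0$, index $1$, and index $2$ critical points; Maksymenko's theorem then produces the desired quasi-isotopy $\{\beta_t\}_{t\in[g_-,g_+]}$.

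Applying Proposition~\ref{cerf} to this $\{\beta_t\}$ gives $\tilde F$ isotopic to $F$ with $\tilde F|_{M\setminus V}=F|_{M\setminus V}$ and $\tilde\varphi=(\tilde F,G)$ stable, whose Stein factorization in $V$ is homeomorphic to the cusp-free Cerf complex of $\{\beta_t\}$, which completes the proof. The main subtlety is the bookkeeping in condition~(5): one must carefully separate the valence $1$ vertices arising from genuine index $0$ or index $2$ critical points from those arising from boundary components of $\Sigma$, and it is exactly condition~(4) together with the preservation of the boundary labeling by $\{\alpha_t\}$ that makes this separation unambiguous.
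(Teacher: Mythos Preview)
Your proposal is correct and follows essentially the same approach as the paper: use condition~(4) with Lemma~\ref{fold} to see that $\alpha_{g_\pm}$ are Morse with distinct critical values, use condition~(5) with the dictionary from Section~\ref{Restrictions} to match critical point counts by index, invoke Theorem~\ref{maksymenko} to obtain a quasi-isotopy $\{\beta_t\}$, and then apply Proposition~\ref{cerf} together with Corollary~\ref{fold_cor}. Your version spells out in more detail the bookkeeping that separates boundary valence~$1$ vertices from those coming from index~$0$ or~$2$ critical points, which the paper leaves implicit in the phrase ``the arguments in Section~\ref{Restrictions}.''
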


\begin{proof}
By the condition (4) and Lemma~\ref{fold}, $\alpha _{g_-},\alpha _{g_+}$ are Morse functions whose critical points have pairwise distinct values.
By the condition (5) and the arguments in Section~\ref{Restrictions}, $\alpha _{g_-},\alpha _{g_+}$ have the same number of critical points of each index.
By Theorem~\ref{maksymenko}, there exists a quasi-isotopy $\{ \beta _t\} _{t\in [g_-,g_+]}$ from $\beta _{g_-}=\alpha _{g_-}$ to $\beta _{g_+}=\alpha _{g_+}$.
Corollary~\ref{eliminate} follows from the proposition and Corollary~\ref{fold_cor}.
\end{proof}

The local moves in Theorem~\ref{moves} are the simplest ones to which we can apply the above.
We can see that the choice of $R$ in Figure~\ref{fig_swallow} satisfies the conditions (1), (2) and (4).
We can also see that the structures of $q_\varphi (V)$ in Figure~\ref{fig_swallow} satisfies the conditions (3) and (5).
By Corollary~\ref{eliminate}, we can cancel the pair of cusp points, and the result is uniquely as in the bottom right of Figure~\ref{fig_moves}.
Similarly, we can obtain the local move of the top of Figure~\ref{fig_moves}.
This completes the proof of Theorem~\ref{moves}.

\end{document}